\newtheorem{theorem}{Theorem}[section]
\newtheorem{definition}{Definition}[section]
\newtheorem{lemma}[theorem]{Lemma}
\newtheorem{prop}{Proposition}[section]
\newtheorem{cor}[theorem]{Corollary}
\newtheorem{example}{Example}[section]
\title{Gradient K\"ahler-Ricci solitons with nonnegative orthogonal bisectional curvature}
\author{Shijin Zhang}
\address{School of Mathematics and Systems Science, Beihang University, Beijing 100191, P.R. China}
\email{shijinzhang@buaa.edu.cn}
\date{}							
\begin{document}
\maketitle
\begin{abstract}
\noindent
In this paper, we prove that any complete shrinking gradient K\"ahler-Ricci solitons with positive orthogonal bisectional curvature must be compact. We also obtain a classification of the complete shrinking gradient K\"ahler-Ricci solitons with nonnegative orthogonal bisectional curvature.
\end{abstract}

\section{Introduction}
A gradient Ricci soliton is a self-similar solution to the Ricci
flow which flows by diffeomorphism  and homothety.  The study of solitons has become
increasingly important in both the study of the Ricci
flow and metric measure theory. In Perelman's proof of Poincar\'e conjecture , one  issue he needs to prove is that three dimensional shrinking  gradient Ricci soliton with positive sectional curvature is compact. It is a natural question to ask whether this   holds in higher dimension. Recently, Munteaun and Wang \cite{Munteanu-Wang} proved the following.
\begin{theorem}[Munteanu-Wang \cite{Munteanu-Wang}]
Any complete shrinking gradient Ricci soliton with nonnegative sectional curvature and positive Ricci curvature is compact.
\end{theorem}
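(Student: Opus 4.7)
The natural strategy is proof by contradiction: assume $M^n$ is complete and noncompact, and derive an incompatibility between the two curvature hypotheses and the shrinker structure. First, I would normalize the soliton potential so that $\text{Ric} + \nabla^2 f = \tfrac12 g$ and $R + |\nabla f|^2 = f$. Standard facts then give $R \geq 0$ (B.-L.\ Chen) and $f$ is proper with $f(x) \asymp \tfrac14 d(x,x_0)^2$ (Cao--Zhou), so divergent sequences in $M$ correspond to $f \to \infty$. The contracted Bianchi identity $\nabla R = 2\,\text{Ric}(\nabla f)$, combined with $\text{Ric} > 0$, shows that $R$ is strictly monotone along every trajectory of $\nabla f$; equivalently, all the eigenvalues of $\nabla^2 f$ are bounded strictly below $\tfrac12$.

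Next, I would use nonnegative sectional curvature to extract rigidity at infinity. For a complete noncompact manifold with $\sec \geq 0$, the Cheeger--Gromoll soul theorem produces a compact totally convex soul $S \subset M$ and at least one ray from every point. The plan is to exploit properness of $f$ together with the soliton identities to match two opposite rays into a line in $M$, and then apply the soliton version of the Cheeger--Gromoll splitting theorem (valid under $\text{Ric} \geq 0$): namely $M = N^{n-1} \times \mathbb{R}$ isometrically with $f$ depending only on the $N$-factor. Such a product structure forces $\text{Ric}(\partial_t,\partial_t) = 0$ along the Euclidean factor, contradicting the hypothesis $\text{Ric} > 0$.

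The main obstacle is the second step. Nonnegative sectional curvature alone does not produce a line (a paraboloid has $\sec \geq 0$ but admits no line), so one must genuinely use the shrinker structure. The most delicate point is controlling the asymptotic behaviour of $f$ and of the level sets $\{f = t\}$ for large $t$ so that a pair of rays obtained from the soul construction actually fits together as a single geodesic. I would expect to do this by combining the Hamilton--type identities for shrinkers with the Toponogov comparison on the level sets, pushing the analysis to the asymptotic cone of $M$. Once the line is in hand, the splitting step is standard and the contradiction is immediate; alternatively, one might try to circumvent line-production altogether by a direct maximum-principle argument on $R$ or $|\nabla f|^2$, but this seems less likely to use the sectional curvature hypothesis effectively.
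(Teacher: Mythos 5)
Your strategy (soul theorem, produce a line, apply the splitting theorem, contradict ${\rm Ric}>0$) is not how this theorem is proved, and the step you yourself flag as ``the main obstacle'' is a genuine gap rather than a technicality. The soul theorem gives a ray from every point, but nothing in your outline turns two rays into a single geodesic line: a noncompact shrinker with ${\rm Ric}>0$ has one end, so there is no second end to anchor an opposite ray, and neither Toponogov comparison on the level sets of $f$ nor passage to the asymptotic cone produces a line in $M$ itself (for nonnegative sectional curvature the asymptotic cone is a metric cone, and the cone point is precisely the obstruction to splitting -- your own paraboloid example illustrates this). Since the entire contradiction rests on the splitting, the argument as written does not close, and I see no way to complete it along these lines.

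The actual proof of Munteanu--Wang, whose K\"ahler analogue is outlined in Section 4 of this paper, is much closer to the ``direct maximum-principle argument'' you dismiss in your last sentence. One shows that the smallest eigenvalue $\lambda$ of ${\rm Ric}$ satisfies $\Delta_f\lambda\leq\lambda$ in the barrier sense; this is exactly where the curvature sign hypothesis enters, since after diagonalizing ${\rm Ric}$ the curvature term in the evolution of $R_{ij}$ becomes $\lambda_1^2+\sum_{j\neq 1}R_{1jj1}(\lambda_j-\lambda_1)\geq\lambda_1^2$. One then applies the maximum principle to the Chow--Lu--Yang function $U=\lambda-a/f-2na/f^2$ outside a large ball to get ${\rm Ric}\geq (a/f)\,g$ near infinity, integrates the identity $\nabla R=2\,{\rm Ric}(\nabla f)$ along integral curves of $\nabla f$ (using the Cao--Zhou estimate $f\asymp\frac14 d^2$) to conclude $R\geq b\log f\to\infty$, and contradicts Cao--Zhou's volume-average bound $\int_{B(p,r)}R\leq c(n)\,{\rm Vol}(B(p,r))$. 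The hypotheses are thus used quantitatively, through a decay rate for the Ricci lower bound and a growth rate for $R$, not through rigidity or splitting.
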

In this paper, we consider the gradient K\"ahler-Ricci soliton, namely a triple $(M^n, g, f)$ associated with a K\"ahler manifold $(M,g)$ such that
\begin{equation}\label{solitonequation}
R_{i\overline{j}}+\nabla_{i}\nabla_{\overline{j}}f=\beta g_{i\overline{j}},\quad\quad {\rm and}\quad\quad \nabla_{i}\nabla_{j}f=0.
\end{equation}
for some constant $\beta\in {\mathbb R}$. It is called shrinking, steady or expanding, if $\beta>0$, $\beta=0$ or $\beta<0$ respectively.

In fact, gradient Ricci solitons are special solutions of Ricci flow. Let $\tau(t):=1-2\beta t>0$ and $\varphi(t):M^{n}\rightarrow M^{n}$ is the $1$-parameter family of diffeomorphisms generated by $X(t):=\frac{1}{\tau(t)}\nabla_{g}f$, that is,
\[\frac{\partial}{\partial t}\varphi(t)(x)=\frac{1}{\tau(t)}\nabla_{g}f(\varphi(t)(x)).\]
Let $g(t)=\tau(t)\varphi(t)^{*}g$. Then $g(t)$ is a solution of Ricci flow:
\begin{equation}\label{RF}
\frac{\partial}{\partial t}g(t)=-2{\rm Ric}(g(t))
\end{equation}
with $g(0)=g$.

Ni \cite{Ni} proved that any complete shrinking gradient K\"ahler Ricci soliton with positive bisectional curvature is compact. Recently, the author and Wu \cite{Wu-Zhang} using Munteanu-Wang's argument to provide an alternative proof of Ni's theorem.
\begin{theorem}[Ni, Wu-Zhang]
Any complete shrinking gradient K\"ahler-Ricci soliton with nonnegative bisectional curvature and positive Ricci curvature is compact.
\end{theorem}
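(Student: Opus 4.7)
The plan is to argue by contradiction following the Munteanu-Wang scheme, using the sharper identities available in the K\"ahler setting. After rescaling so that $\beta=\tfrac12$, the shrinking K\"ahler-Ricci soliton satisfies
\[
R_{i\bar j}+\nabla_{i}\nabla_{\bar j}f=\tfrac12 g_{i\bar j},\qquad \nabla_{i}\nabla_{j}f=0,
\]
from which one derives the companion identities $R+\Delta f = n$, $R+|\nabla f|^{2}=f+C$, and $\nabla_{i}R=2R_{i\bar j}\nabla_{\bar j}f$; moreover $V=g^{i\bar j}(\partial_{\bar j}f)\,\partial_{i}$ is a holomorphic vector field on $M$. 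Suppose for contradiction that $M$ is non-compact.

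First I would invoke the Cao-Zhou estimates to locate a minimum point $p\in M$ of $f$ and obtain the quadratic growth $\tfrac14(d(x,p)-c)^{2}\le f(x)\le \tfrac14(d(x,p)+c)^{2}$ together with $|\nabla f|^{2}\le f$. Non-compactness then provides a minimizing unit-speed ray $\gamma:[0,\infty)\to M$ from $p$. Combining the second variation of arclength with $\mathrm{Hess}(f)+\mathrm{Ric}=\tfrac12 g$ yields an integral bound of the form $\int_{0}^{r}\mathrm{Ric}(\dot\gamma,\dot\gamma)\,ds\le c(1+r)$, so in an averaged sense $\mathrm{Ric}$ must decay to zero along $\gamma$. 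Selecting $x_{k}=\gamma(s_{k})\to\infty$ and using Perelman's non-collapsing for the associated Ricci flow $g(t)=\tau(t)\varphi(t)^{*}g$, Hamilton's compactness theorem yields a pointed Cheeger-Gromov limit $(M_{\infty},g_{\infty},x_{\infty})$ that is a complete non-flat K\"ahler manifold of nonnegative bisectional curvature (the nonnegativity being preserved by Mok's theorem along the flow), and along which $\mathrm{Ric}_{\infty}(\dot\gamma_{\infty},\dot\gamma_{\infty})(x_{\infty})=0$.

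The next step is to apply the Bando-Mok strong maximum principle for Ricci flow with nonnegative bisectional curvature: the null set of bisectional curvature is parallel, which forces a local holomorphic splitting $M_{\infty}\cong \mathbb{C}^{k}\times N$ with $k\ge 1$. Pulling this degeneracy back to $M$ via the soliton identity (using that the holomorphic vector field $V$ commutes with the parallel transport defining the splitting), one concludes that $M$ itself admits a direction in which $\mathrm{Ric}$ vanishes, contradicting the assumption of strict positivity of the Ricci curvature. The main obstacle is this final transfer step: strict inequalities do not automatically pass through the Cheeger-Gromov-Hamilton limit, so one must exploit the special structure of the soliton -- quadratic growth of $f$, holomorphicity of $V$, and the propagation of zero-bisectional directions via Bando-Mok -- to turn the local degeneracy of $g_{\infty}$ into a genuine contradiction on $M$.
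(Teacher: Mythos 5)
Your proposal breaks down at exactly the point you flag as ``the main obstacle,'' and the gap is not repairable along the lines you sketch. A pointed Cheeger--Gromov limit at infinity that splits off a flat factor, or merely has a null Ricci direction at its base point, is perfectly consistent with $\mathrm{Ric}>0$ everywhere on $M$: strict positivity is an open condition and does not pass through blow-down limits, and neither the holomorphy of $\nabla f$ nor the Bando--Mok parallelism of the null set gives any mechanism for transporting a degenerate direction of $g_{\infty}$ back to a degenerate direction at a finite point of $M$ (the strong maximum principle splits $M_{\infty}$, not $M$). The scenario you must exclude is precisely a soliton whose Ricci curvature is everywhere positive but decays to zero at infinity, and a qualitative limiting argument cannot distinguish this from the situation you want to contradict. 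There are also unaddressed technical problems upstream: Hamilton's compactness requires uniform curvature bounds near the points $x_{k}$, which are not available a priori for a noncompact shrinker (avoiding such an assumption is the whole point of the Munteanu--Wang method), and the second-variation estimate gives $\int_{1}^{r-1}\mathrm{Ric}(\dot\gamma,\dot\gamma)\,ds\le C$ with $C$ independent of $r$, hence only $\liminf_{s\to\infty}\mathrm{Ric}(\dot\gamma,\dot\gamma)=0$, not a limit space with an exact null direction.

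The argument the paper relies on (Munteanu--Wang together with Chow--Lu--Yang, reproduced in the proof of Theorem 4.1) replaces the qualitative limit by a quantitative lower bound. The smallest eigenvalue $\lambda$ of $\mathrm{Ric}$ satisfies $\Delta_{f}\lambda\le\lambda$ in the barrier sense (this is where nonnegative bisectional curvature enters, via $R_{i\bar j k\bar l}R_{\bar k l}v^{i}v^{\bar j}\ge\lambda^{2}$); setting $a=\min_{\partial B(p,r)}\lambda>0$ and applying the maximum principle to $U=\lambda-a/f-2na/f^{2}$ on $M\setminus B(p,r)$ yields $\mathrm{Ric}\ge af^{-1}g$. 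Integrating $\nabla_{i}R=R_{i\bar k}\nabla_{k}f$ along the gradient flow of $f$ then gives $R\ge b\log f$, so $\int_{B(p,r)}R\ge b\,c(n)\log(r)\,\mathrm{Vol}(B(p,r))$ for large $r$, which contradicts the Cao--Zhou bound $\int_{B(p,r)}R\le c_{1}(n)\mathrm{Vol}(B(p,r))$. To salvage your approach you would need to extract a quantitative decay rate for $\mathrm{Ric}$ from the blow-down and feed it into an integral estimate of this kind; the splitting of the limit by itself produces no contradiction.
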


In this short paper we will consider another type of curvature, orthogonal holomorphic bisectional curvature, which was introduced by Cao and Hamilton \cite{Cao-Hamilton}.
\begin{definition}
A K\"ahler manifold $(M^{n}, g) (n\geq 2)$ is said to have nonnegative (or positive) orthogonal bisectional curvature if for any orthonormal basis $\{e_{i}\}$, we have
\begin{equation}
R(e_{i},\overline{e_{i}}, e_{j},\overline{e_{j}})\geq 0 \quad( {\rm or} >0)
\end{equation}
for any $i\neq j.$
\end{definition}
Cao and Hamilton \cite{Cao-Hamilton} observed that the nonnegativity of the orthogonal holomorphic bisectional curvature is preserved under the K\"ahler-Ricci flow on the compact K\"ahler manifold.  Chen \cite{Chen} generalized the Frankel conjecture with positive orthogonal bisectional curvature but under the additional condition $c_{1}(M)>0$.  Gu and Zhang \cite{Gu-Zhang} proved that the positive orthogonal bisectional curvature implies $c_{1}(M)>0$, hence they proved the generalized Frankel conjecture with positive orthogonal bisectional curvature. Gu and Zhang also gave a complete classification of compact manifolds with nonnegative orthogonal bisectional curvature, see Theorem 1.3 in \cite{Gu-Zhang}.

In this paper,  using the argument of Munteanu and Wang \cite{Munteanu-Wang}, we prove that there is no noncompact complete shrinking gradient K\"ahler-Ricci soliton with nonnegative orthogonal bisectional curvature and positive Ricci curvature.
\begin{theorem}\label{MainThm1}
Let $(M^{n},g, f)$ be a complete shrinking gradient K\"ahler-Ricci soliton with nonnegative orthogonal bisectional curvature and positive Ricci curvature, then $M$ is compact.
\end{theorem}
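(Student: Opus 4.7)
The plan is to follow the strategy of Munteanu-Wang \cite{Munteanu-Wang}, replacing the role of nonnegative sectional curvature by nonnegative orthogonal bisectional curvature, and adapting all pointwise curvature identities to the K\"ahler setting. The argument is by contradiction: I assume $M$ is noncompact and produce a point where the hypothesized positivity of Ricci must fail.

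After rescaling so that $\beta=\tfrac12$, one has the standard soliton identities $R+|\nabla f|^{2}=f$ and $\Delta f + R = n$, and by the Cao--Zhou estimates the potential $f$ is proper with quadratic growth, so the level sets $\Sigma_{c}=\{f=c\}$ are compact smooth hypersurfaces for all sufficiently large $c$. My plan is to study the Ricci tensor along the gradient flow of $f$. Differentiating the soliton equation and using the contracted Bianchi identity yields $\nabla_{i}R = 2R_{i\bar{j}}\nabla^{\bar{j}}f$, and a further differentiation produces a drift-diffusion type identity for $R_{i\bar{j}}$ whose source term is the full K\"ahler curvature tensor contracted twice with $\nabla f$. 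Choosing a unitary frame $\{e_{i}\}$ that diagonalizes $\mathrm{Ric}$, the identity
\[
R_{i\bar{i}} \;=\; R_{i\bar{i}i\bar{i}} \;+\; \sum_{j\neq i} R_{i\bar{i}j\bar{j}}
\]
reveals that the off-diagonal sum is nonnegative by hypothesis. This is the K\"ahler substitute for the sign control that Munteanu--Wang obtained from nonnegativity of every sectional curvature.

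With these identities in place, the idea is to apply a maximum principle for the drift Laplacian $\Delta_{f} := \Delta - \nabla_{\nabla f}$ to the smallest Ricci eigenvalue, or equivalently to $\mathrm{Ric}(X,\bar{X})$ for a minimizing unit $(1,0)$-vector field $X$ on each level set. Because $|\nabla f|^{2}\leq f$ and $f\to\infty$, the soliton identity $R+|\nabla f|^{2}=f$ forces $R$ to stay bounded along an escaping sequence; combining this with the drift estimate and the sign control on $\sum_{j\neq i}R_{i\bar{i}j\bar{j}}$, the smallest Ricci eigenvalue must approach zero along a sequence of points escaping to infinity, contradicting the strict positivity of Ricci on $M$.

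The main obstacle I expect is that the Riemannian Munteanu--Wang argument uses nonnegativity of \emph{every} sectional curvature appearing in the evolution, while the orthogonal bisectional hypothesis leaves the holomorphic sectional curvatures $R_{i\bar{i}i\bar{i}}$ uncontrolled. The delicate point will be to reorganize the computation so that these uncontrolled terms either cancel by a trace/summation argument or are dominated after contracting with $\nabla f\otimes\overline{\nabla f}$ by the nonnegative orthogonal terms; a plausible device is to compute the drift of the trace $\mathrm{Ric}(\nabla f,\overline{\nabla f})$ rather than of individual eigenvalues, since the resulting quadratic form naturally involves only mixed bisectional pairings. A secondary technical point is turning the weighted maximum principle on noncompact level sets into an honest contradiction, which in Munteanu--Wang relies on careful use of the compactness of $\Sigma_{c}$ together with the proper behavior of $f$.
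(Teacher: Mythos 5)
There is a genuine gap, and it lies in your endgame. Your proposed contradiction is that the smallest Ricci eigenvalue tends to zero along a sequence escaping to infinity, ``contradicting the strict positivity of Ricci on $M$.'' This is not a contradiction: the hypothesis is pointwise positivity of $\mathrm{Ric}$, not a uniform positive lower bound, and a noncompact manifold can perfectly well have positive Ricci curvature decaying to zero at infinity. The intermediate claim that $R+|\nabla f|^{2}=f$ ``forces $R$ to stay bounded along an escaping sequence'' is also false as stated; it only gives $0\le R\le f$, and $f\to\infty$. So even if all your identities go through, the argument does not close.

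What the paper actually does after establishing the barrier inequality $\Delta_{f}\lambda\le\lambda$ for the smallest Ricci eigenvalue (and here your identity $R_{i\bar i}=R_{i\bar ii\bar i}+\sum_{j\ne i}R_{i\bar ij\bar j}$ is indeed the right ingredient: it lets one write $R_{i\bar jk\bar l}R_{\bar kl}v^{i}v^{\bar j}=\lambda_{1}^{2}+\sum_{j\ne 1}R_{1\bar 1j\bar j}(\lambda_{j}-\lambda_{1})\ge\lambda_{1}^{2}$, so the uncontrolled holomorphic sectional curvature is eliminated rather than merely dominated) is quantitative. One fixes a large ball $B(p,r)$, sets $a:=\min_{\partial B(p,r)}\lambda>0$, and applies the maximum principle to the Chow--Lu--Yang function $U=\lambda-\frac{a}{f}-\frac{2na}{f^{2}}$ on $M\setminus B(p,r)$ to conclude $\mathrm{Ric}\ge\frac{a}{f}$ everywhere outside the ball. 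Feeding this into the soliton identity $\nabla_{i}R=R_{i\bar k}\nabla_{k}f$ and integrating along gradient flow lines of $f$ yields the logarithmic growth $R\ge b\log f$, hence $\int_{B(p,r)}R\ge b\,c(n)\log(r)\,\mathrm{Vol}(B(p,r))$ for large $r$ by the Cao--Zhou quadratic growth of $f$. This contradicts Cao--Zhou's upper bound $\int_{B(p,r)}R\le c_{1}(n)\,\mathrm{Vol}(B(p,r))$. Without this quantitative lower bound $\mathrm{Ric}\ge a/f$ and the resulting growth of the scalar curvature integral, there is nothing to contradict; this is the missing core of the Munteanu--Wang mechanism that your outline does not reproduce.
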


For general K\"ahler manifold with nonnegative orthogonal bisectional curvature can not imply its Ricci curvature is nonnegative. There is an example from Gu-Zhang \cite{Gu-Zhang}.
\begin{example}
Let $(M,g)=(\Sigma, g_{1})\times (\mathbb{P}^{m}, g_{2})$, where $\Sigma$ is a Riemann surface with Gauss curvature $\kappa(\Sigma)\geq -4$ and $ \min(\kappa(\Sigma))=-4$ and $g_{2}$ is the standard Fubini-Study metric such that the sectional curvature of $\mathbb{P}^{m}$ is $1$. Then the orthogonal bisectional curvature of $(M,g)$ is nonnegative, but the Ricci curvature of $(M,g)$ is not nonnegative.
\end{example}
In fact, Gu and Zhang also showed that the isotropic curvature of $(M,g)$ is not nonnegative.

But for the complete steady or shrinking gradient K\"ahler-Ricci soliton with nonnegative orthogonal bisectional curvature, we show that the Ricci curvature is nonnegative. We have the following proposition.
\begin{prop}\label{RicNonnegative}
Let $(M^{n}, g, f,\beta)$ be a complete gradient K\"ahler-Ricci solitons with nonnegative orthogonal bisectional curvature. Then we have\\
{\rm (i)} If $\beta\geq 0$, then ${\rm Ric}(g)\geq 0$. Furthermore, if the orthogonal bisectional curvature is positive, then ${\rm Ric}(g)> 0$.\\
{\rm (ii)} If $\beta<0$, then ${\rm Ric}(g)\geq \beta g.$
\end{prop}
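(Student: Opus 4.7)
The plan is to run a maximum-principle argument on the smallest Ricci eigenvalue, using the Bochner-type identity that a gradient K\"ahler-Ricci soliton imposes on the Ricci tensor. Specifically, commuting covariant derivatives and using the soliton equation together with the contracted Bianchi identity $\nabla_i R = 2 R_{i\bar k}\nabla^{\bar k} f$ yields a formula for the drift Laplacian $\Delta_f := \Delta - \nabla f\cdot\nabla$ acting on $R_{i\bar j}$. After simplification in a unitary frame diagonalizing Ric, the consequence I need is that at an interior minimum $p$ of the smallest Ricci eigenvalue $\lambda_1$, with $\{E_i\}$ a unitary frame at $p$ satisfying $R_{i\bar j}(p) = \lambda_i\delta_{ij}$ and $\lambda_1\le \cdots\le \lambda_n$,
\[
0 \;\le\; \Delta_f R_{1\bar 1}(p) \;=\; 2\beta\lambda_1(p) \;-\; 2\sum_{k\ge 2} R_{1\bar 1 k\bar k}(\lambda_k-\lambda_1).
\]
This is the K\"ahler-Ricci-soliton analogue of the Lichnerowicz-Bochner identity underlying the Munteanu-Wang argument.

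For part (i), I would first justify the existence of such an interior minimum on a noncompact shrinking soliton by exhausting by the compact sublevel sets $\{f\le c\}$, available by Cao-Zhou's quadratic properness estimate on $f$, and passing to the limit. With $p$ in hand, the nonnegative orthogonal bisectional curvature gives $R_{1\bar 1 k\bar k}\ge 0$ for $k\ne 1$, and the eigenvalue ordering gives $\lambda_k - \lambda_1\ge 0$, so the sum is nonnegative; hence $\beta\lambda_1(p)\ge 0$. For $\beta>0$ this immediately forces $\lambda_1\ge 0$, while for $\beta=0$ equality holds throughout and the strong maximum principle for $\Delta_f$ finishes the argument. To promote nonnegativity to strict positivity under strictly positive orthogonal bisectional curvature, I would revisit the equality case: at any point where $\lambda_1=0$, every summand $R_{1\bar 1 k\bar k}(\lambda_k-\lambda_1)=0$ together with strict positivity of $R_{1\bar 1 k\bar k}$ forces $\lambda_k=0$ for all $k$, so $\mathrm{Ric}\equiv 0$ there; the soliton equation then gives $\nabla^2 f=\beta g$ with $\beta>0$, and the rigidity of a nontrivial soliton with such a parallel potential contradicts the strict positivity assumption on the curvature.

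The main obstacle is part (ii). When $\beta<0$ the same computation only yields $\beta\lambda_1(p)\ge 0$, i.e.\ $\lambda_1(p)\le 0$, which is consistent with $\lambda_1<\beta$ and so insufficient. My plan is to instead run the analogous maximum principle on the tensor $M_{i\bar j}:= R_{i\bar j} - \beta g_{i\bar j} = -\nabla_i\nabla_{\bar j}f$, whose smallest eigenvalue is $\nu_1 := \lambda_1-\beta$, and to verify Hamilton's null-eigenvector condition: at a putative zero of $\nu_1$ the reaction term in the Bochner identity for $M$ reduces to $\sum_{k\ge 2} R_{1\bar 1 k\bar k}(\lambda_k-\beta)$, which is nonnegative because $\lambda_k\ge \beta$ at such an infimum and $R_{1\bar 1 k\bar k}\ge 0$ by hypothesis. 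Combining this with a weak maximum principle for $\Delta_f$ on the complete noncompact expanding soliton---localized by a cutoff in $f$ and with error terms at infinity controlled via the soliton identity $R+|\nabla f|^2 = 2\beta f + C$---should yield $\mathrm{Ric}\ge \beta g$. The truly delicate point, and what I expect to be the main obstacle, is executing the noncompact maximum principle rigorously: in the expanding case $f$ need not be proper and the infimum of $\nu_1$ may only be approached at infinity, so an Omori-Yau-type device on the weighted space $(M,e^{-f}dV)$ is likely required.
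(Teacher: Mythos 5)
There are two genuine gaps. The most serious is the existence of the interior minimum of $\lambda_1$ on which your whole argument rests. Exhausting $M$ by the sublevel sets $\{f\le c\}$ and ``passing to the limit'' does not produce one: on each compact piece the minimum may sit on the boundary $\{f=c\}$ and escape to infinity as $c\to\infty$, so $\inf_M\lambda_1$ need not be attained; moreover part (i) includes the steady case $\beta=0$, where the Cao--Zhou properness estimate is unavailable and $\{f\le c\}$ need not even be compact. This localization problem is precisely what the paper's proof is built around: it minimizes the cut-off quantity $\Phi=\eta(r(x)/c)\lambda(x)$, whose minimum is always attained, and controls the resulting error term $\frac{\eta'}{c}h\,\Delta_f r$ by Perelman's estimate $\Delta_f r\le -\langle\nabla f,\nabla r\rangle(o)-\frac{\beta}{2}r+C$ (Lemma \ref{Perelman8.3}), letting $c\to\infty$ at the end. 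You flag this difficulty for part (ii) but it is equally present, and unresolved, in part (i).

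The second gap is computational but has real consequences: your Bochner identity drops the diagonal term. Since $R_{1\overline{1}k\overline{l}}R_{\overline{k}l}=\lambda_1^2+\sum_{k\ge2}R_{1\overline{1}k\overline{k}}(\lambda_k-\lambda_1)$ (equation (\ref{8}) in the paper), the soliton identity $\Delta_fR_{i\overline{j}}=\beta R_{i\overline{j}}-R_{i\overline{j}l\overline{k}}R_{k\overline{l}}$ gives at an interior minimum $0\le\Delta_fR_{1\overline{1}}\le\beta\lambda_1-\lambda_1^2=\lambda_1(\beta-\lambda_1)$, so if $\lambda_1<0$ then $\lambda_1\ge\beta$ immediately: part (ii) falls out of exactly the same computation as part (i), and your detour through $M_{i\overline{j}}=R_{i\overline{j}}-\beta g_{i\overline{j}}$, Hamilton's null-eigenvector condition and an Omori--Yau device is unnecessary --- it was forced on you only because you discarded $-\lambda_1^2$. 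Finally, the endgame of your strict-positivity argument is not valid as stated: knowing $\mathrm{Ric}=0$ at a single point gives $\nabla\overline{\nabla}f=\beta g$ only at that point, from which no soliton rigidity follows. The usable conclusion from $\mathrm{Ric}(x_0)=0$ is $R(x_0)=0$, which contradicts $R>0$ under positive orthogonal bisectional curvature; the paper instead keeps one strictly positive eigenvalue in the sum to get the strict inequality $\Delta_f h<\beta h$ near $x_0$ and concludes by the strong maximum principle.
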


Using Proposition \ref{RicNonnegative}, Theorem \ref{RicNonnegative} and de Rham decomposition theorem, we obtain a classification of complete shrinking gradient K\"ahler-Ricci solitons with nonnegative orthogonal holomorphic bisectional curvature.
\begin{theorem}\label{MainThm2}
Let $(M^n, g, f)$ be a complete shrinking gradient K\"ahler-Ricci soliton with nonnegative orthogonal bisectional curvature. Then we have\\
{\rm(i)} If the orthogonal bisectional curvature of $M$ is positive then $M$ must be isometric-biholomorphic to ${\mathbb C}P^{n}$;\\
{\rm(ii)} If $M$ has nonnegative orthogonal bisectional curvature then the universal cover $\tilde{M}$ splits as $\tilde{M}=N_{1}\times N_{2}\times\cdots\times N_{l}\times {\mathbb C}^{k}$ isometric-biholomorphically, where $N_{i}$ are compact irreducible Hermitian Symmetric Spaces.
\end{theorem}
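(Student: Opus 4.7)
The plan is to derive both parts from the three ingredients highlighted in the statement: Proposition \ref{RicNonnegative}, Theorem \ref{MainThm1}, and the K\"ahler de Rham decomposition, combined with the existing classifications of compact K\"ahler manifolds with nonnegative orthogonal bisectional curvature.

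For (i), I would use Proposition \ref{RicNonnegative}(i) in the shrinking case to upgrade positive orthogonal bisectional curvature to ${\rm Ric}(g)>0$. Theorem \ref{MainThm1} then forces $M$ to be compact. Chen's generalized Frankel theorem, combined with Gu-Zhang's observation that positive orthogonal bisectional curvature implies $c_{1}(M)>0$, identifies $M$ with ${\mathbb C}P^{n}$ biholomorphically. Since the shrinking soliton equation with $\beta>0$ puts the K\"ahler class proportional to $c_{1}(M)$, the Tian-Zhu uniqueness theorem for shrinking K\"ahler-Ricci solitons on a compact Fano manifold (specializing to Bando-Mabuchi when the soliton is K\"ahler-Einstein) then identifies $g$ with the Fubini-Study metric up to scale and automorphism, giving the isometric-biholomorphic identification claimed.

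For (ii), I would lift the soliton to the universal cover $\tilde{M}$, which inherits nonnegative orthogonal bisectional curvature and, by Proposition \ref{RicNonnegative}, nonnegative Ricci curvature. The K\"ahler version of the de Rham decomposition splits $\tilde{M}$ isometric-biholomorphically as ${\mathbb C}^{k}\times M_{1}\times\cdots\times M_{l}$, with the $M_{i}$ simply connected, irreducible and non-flat. Because $\nabla_{i}\nabla_{j}\tilde{f}=0$ and the mixed components of both ${\rm Ric}(\tilde{g})$ and $\tilde{g}$ vanish across the splitting, the soliton equation \eqref{solitonequation} forces the mixed components of $\tilde{f}_{i\bar{j}}$ to vanish as well, so $\tilde{f}=f_{0}+f_{1}+\cdots+f_{l}$ splits as a sum of potentials depending only on the corresponding factors, making each $(M_{i},g_{i},f_{i})$ a complete shrinking gradient K\"ahler-Ricci soliton with nonnegative orthogonal bisectional curvature.

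It remains to show each irreducible factor $M_{i}$ is a compact irreducible Hermitian symmetric space. Since $M_{i}$ is irreducible and non-flat, a Mok-type strong maximum principle applied to the evolution of $R_{i\bar{j}}$ along the induced K\"ahler-Ricci flow rules out any non-trivial null distribution of Ricci, since such a distribution would be parallel and produce a further de Rham splitting; hence ${\rm Ric}(g_{i})>0$ and Theorem \ref{MainThm1} forces $M_{i}$ to be compact. Gu-Zhang's classification (Theorem 1.3 of \cite{Gu-Zhang}) of compact K\"ahler manifolds with nonnegative orthogonal bisectional curvature then identifies each compact irreducible factor with a compact irreducible Hermitian symmetric space. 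I expect the main technical point to be justifying the Mok-type strong maximum principle for the kernel of Ricci using only the nonnegativity of the orthogonal bisectional curvature; the relevant curvature terms in the drift-Laplacian identity for $R_{i\bar{j}}$ along the soliton need to retain a favorable sign despite the possibly indefinite diagonal bisectional components.
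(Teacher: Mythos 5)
Your argument follows essentially the same route as the paper: Proposition \ref{RicNonnegative} and Theorem \ref{MainThm1} give compactness and then Chen/Gu--Zhang plus Tian--Zhu give (i), while the parallelism of the null space of ${\rm Ric}$, the de Rham decomposition, Gu--Zhang's Theorem 1.3 and part (i) give (ii). The ``favorable sign'' you flag as the main technical point is exactly the paper's computation \eqref{8}: at a null eigenvector $v$ of ${\rm Ric}\ge 0$ the minimal eigenvalue is $\lambda_{1}=0$, so $R_{i\overline{j}k\overline{l}}R_{\overline{k}l}v^{i}v^{\overline{j}}=\sum_{j\neq 1}R_{1\overline{1}j\overline{j}}\lambda_{j}\geq 0$ and the possibly indefinite diagonal term $R_{1\overline{1}1\overline{1}}$ drops out because it is multiplied by $\lambda_{1}$.
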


In section 2, we recall some preliminaries for gradient Ricci solitons. In section 3, we prove Proposition \ref{RicNonnegative}. In section 4, we prove Theorem \ref{MainThm1} and Theorem \ref{MainThm2}.

\section{Preliminaries}
In this section, we recall some famous formulae  for gradient K\"ahler-Ricci solitons. Let $(M^n, g, f,\beta) (\beta\in \mathbb{R})$ be a gradient K\"ahler-Ricci solitons, i.e.,
\begin{equation}
R_{i\overline{j}}+\nabla_{i}\nabla_{\overline{j}}f=\beta g_{i\overline{j}},\quad\quad {\rm and}\quad\quad \nabla_{i}\nabla_{j}f=0.
\end{equation}
Then the following formulae are well known.
\begin{lemma}
\quad
\begin{itemize}
\item[(1)] $R+\Delta f=\beta n$;
\item[(2)] $\nabla_{i} R=R_{i\overline{k}}\nabla_{k}f$;
\item[(3)] $R+|\nabla f|^{2}-\beta f={\rm constant}$;
\item[(4)] $\Delta_{f}R_{i\overline{j}}=\beta R_{i\overline{j}}-R_{i\overline{j}l\overline{k}}R_{k\overline{l}};$
\item[(5)] $\Delta_{f}R=\beta R-|{\rm Ric}|^{2}.$
\end{itemize}
Here $|\nabla f|^{2}=g^{i\overline{j}}\nabla_{i}f\nabla_{\overline{j}}f$ and $\Delta_{f}R_{i\overline{j}}=\Delta R_{i\overline{j}}-g^{k\overline{l}}\nabla_{k}f\nabla_{\overline{l}}R_{i\overline{j}}.$
\end{lemma}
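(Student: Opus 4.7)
The plan is to derive the five identities in order, using only the defining soliton equation $R_{i\overline{j}}+\nabla_i\nabla_{\overline{j}}f=\beta g_{i\overline{j}}$, the K\"ahler condition $\nabla_i\nabla_j f=0$, and standard K\"ahler commutator and second Bianchi identities. Item (1) is the immediate $g^{i\overline{j}}$-trace of the soliton equation, using $g^{i\overline{j}}g_{i\overline{j}}=n$ and the convention $\Delta f=g^{i\overline{j}}\nabla_i\nabla_{\overline{j}}f$.

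For (2), differentiate (1) to get $\nabla_l R=-g^{i\overline{j}}\nabla_l\nabla_i\nabla_{\overline{j}}f$. K\"ahler flatness of the $(2,0)$-type curvature lets $\nabla_l$ and $\nabla_i$ commute on the $(0,1)$-tensor $\nabla_{\overline{j}}f$, while $\nabla_l\nabla_{\overline{j}}=\nabla_{\overline{j}}\nabla_l$ on the scalar $f$, so $\nabla_l\nabla_i\nabla_{\overline{j}}f=\nabla_i\nabla_{\overline{j}}\nabla_l f$. Since $\nabla_i\nabla_l f=0$ by hypothesis, the outer pair collapses to the commutator $[\nabla_i,\nabla_{\overline{j}}]\nabla_l f=-R_{i\overline{j}l}{}^{p}\nabla_p f$; contracting with $g^{i\overline{j}}$ delivers (2). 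For (3), compute $\nabla_i(R+|\nabla f|^2-\beta f)$: the Hessian expansion of $\nabla_i|\nabla f|^2$ kills its $\nabla_i\nabla_j f$ piece, and substituting $\nabla_i\nabla_{\overline{k}}f=\beta g_{i\overline{k}}-R_{i\overline{k}}$ in the remaining term leaves $\beta\nabla_i f - g^{j\overline{k}}R_{i\overline{k}}\nabla_j f$; the $\beta\nabla_i f$ cancels with $-\beta\nabla_i f$, and the residual $\nabla_i R - g^{j\overline{k}}R_{i\overline{k}}\nabla_j f$ vanishes by (2). The $\nabla_{\overline{i}}$-derivative is conjugate since $f$ is real.

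Item (4) is the main step. Repeating the argument for (2) on the full tensor equation $R_{i\overline{j}}=\beta g_{i\overline{j}}-\nabla_i\nabla_{\overline{j}}f$ (rather than on its trace) yields the first-order identity $\nabla_l R_{i\overline{j}}=R_{i\overline{j}l}{}^{p}\nabla_p f$. Applying $g^{l\overline{m}}\nabla_{\overline{m}}$ and using Leibniz splits the result into two pieces. The first, $g^{l\overline{m}}(\nabla_{\overline{m}}R_{i\overline{j}l}{}^{p})\nabla_p f$, is handled by the K\"ahler second Bianchi identity $\nabla_{\overline{m}}R_{i\overline{j}l\overline{q}}=\nabla_{\overline{q}}R_{i\overline{j}l\overline{m}}$, collapsing it to $g^{p\overline{q}}\nabla_{\overline{q}}R_{i\overline{j}}\,\nabla_p f$; this is exactly the transport term converting $\Delta R_{i\overline{j}}$ into $\Delta_f R_{i\overline{j}}$. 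The second, $g^{l\overline{m}}R_{i\overline{j}l}{}^{p}\nabla_{\overline{m}}\nabla_p f$, becomes $\beta R_{i\overline{j}} - R_{i\overline{j}l\overline{k}}R_{k\overline{l}}$ after substituting $\nabla_{\overline{m}}\nabla_p f=\beta g_{p\overline{m}}-R_{p\overline{m}}$ from the soliton equation. The main obstacle is keeping sign and Laplacian conventions consistent when commuting covariant derivatives acting on a $(1,1)$-tensor, but each step is routine K\"ahler tensor calculus.

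Finally, (5) is the $g^{i\overline{j}}$-trace of (4): the left side becomes $\Delta_f R$ since $\Delta_f$ commutes with the contraction, while $g^{i\overline{j}}R_{i\overline{j}l\overline{k}}=R_{l\overline{k}}$ converts the quadratic curvature term to $R_{l\overline{k}}R_{k\overline{l}}=|\mathrm{Ric}|^2$ by the Hermitian symmetry of Ricci, yielding $\Delta_f R = \beta R - |\mathrm{Ric}|^2$.
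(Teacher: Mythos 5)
Your derivation is correct: each of (1)--(5) follows exactly as you describe from the soliton equation, the vanishing of the $(2,0)$-curvature on a K\"ahler manifold, and the K\"ahler second Bianchi identity, and the signs and contractions all check out (the only convention point, the ordering $g^{l\overline{m}}\nabla_{\overline{m}}\nabla_l$ versus $g^{l\overline{m}}\nabla_l\nabla_{\overline{m}}$ in $\Delta$, is immaterial here since the commutator terms cancel when contracted against the Hermitian tensor $R_{i\overline{j}}$). The paper itself offers no proof, only the citation to Chapter 1 of \cite{CLN}; your argument is precisely the standard derivation delegated to that reference.
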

\begin{proof}
The formulas are well known for gradient Ricci solitons, see Chapter 1 in \cite{CLN}.
\end{proof}
Cao and Zhou \cite{Cao-Zhou}, also see Fang-Man-Zhang \cite{Fang-Man-Zhang} or Haslhofer-M\"{u}ller \cite{Haslhofer-Muller}, proved the following estimate for the potential function $f$ in the case $\beta=1.$
\begin{lemma}[Cao-Zhou]
For $\beta=1$. Let $p$ be a point such that $\nabla f(p)=0$, $d(x):=d(x,p)$ denotes the distance function from $x$ to $p$. Then there exist uniform constants $c_1$ and $c_2$ such that
\begin{equation}
\frac{1}{4}(d(x)-c_1)^{2}\leq f(x) \leq \frac{1}{4}(d(x)+c_2)^{2}.
\end{equation}
\end{lemma}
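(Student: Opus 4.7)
The plan is to follow Cao--Zhou's argument for real gradient shrinking Ricci solitons \cite{Cao-Zhou}, applied to the K\"ahler setting. Since $\nabla_i\nabla_j f = 0$ annihilates the $(2,0)+(0,2)$ part of the real Hessian, the complex equations \eqref{solitonequation} with $\beta=1$ combine into the real identity $\mathrm{Ric}(g) + \nabla^2 f = g$. After adding a constant to $f$, item~(3) of the preceding lemma reads $R + |\nabla f|^2 = f$. Combined with the standard nonnegativity of scalar curvature on complete shrinking Ricci solitons, this yields $0 \leq |\nabla f|^2 \leq f$; in particular $f \geq 0$ everywhere.

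For the upper bound, $|\nabla\sqrt{f}|\leq 1/2$ wherever $f>0$. Integrating along a minimizing unit-speed geodesic from $p$ to $x$ yields $\sqrt{f(x)} \leq \sqrt{f(p)} + d(x)/2$, so $f(x) \leq \tfrac{1}{4}(d(x)+c_2)^2$ with $c_2 := 2\sqrt{f(p)}$.

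For the lower bound, fix a minimizing unit-speed geodesic $\gamma:[0, s_0]\to M$ with $\gamma(0)=p$ and $\gamma(s_0)=x$, and set $X = \dot\gamma$ and $F(s) := f(\gamma(s))$. The real soliton equation gives
\[
F''(s) = \nabla^2 f(X,X) = 1 - \mathrm{Ric}(X, X),
\]
so using $F'(0) = \langle\nabla f(p), X(0)\rangle = 0$,
\[
F'(s_0) = s_0 - \int_0^{s_0}\mathrm{Ric}(X, X)\,ds.
\]
The goal is to establish $\int_0^{s_0}\mathrm{Ric}(X, X)\,ds \leq s_0/2 + C$ for a uniform constant $C$; this will give $F'(s_0) \geq s_0/2 - C$, and integrating along $\gamma$ then produces the desired $f(x) \geq \tfrac{1}{4}(d(x)-c_1)^2$. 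To establish the Ricci integral bound I would apply the second variation of arc length to $(2n-1)$ parallel orthonormal vector fields transverse to $X$, each multiplied by a cut-off $\phi$ that vanishes at the endpoints, is linear on $[0,1]$ and $[s_0-1, s_0]$, and is identically $1$ on $[1, s_0-1]$; summation yields $\int_0^{s_0}\phi^2\,\mathrm{Ric}(X,X)\,ds \leq 2(2n-1)$. A careful bootstrap---combining this with the pointwise bound $|F'(s)| \leq \sqrt{F(s)} \leq s/2 + O(1)$ from the upper bound and the soliton identity $\mathrm{Ric}(X, X) = 1 - F''$---absorbs the contribution from the two unit endpoint intervals and refines the bulk estimate to the desired $s_0/2 + C$.

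\emph{Main obstacle.} The delicate point is converting the ``bulk'' second-variation inequality, which a priori only controls $\int_1^{s_0-1}\mathrm{Ric}(X,X)\,ds$ by a constant, into the refined bound $\int_0^{s_0}\mathrm{Ric}(X,X)\,ds \leq s_0/2 + C$ needed to close the ODE argument. Near the far endpoint $\gamma(s_0)$, $\mathrm{Ric}(X,X)$ could in principle grow as fast as $R \sim f \sim s_0^2/4$, and one must use the coupling of the first-variation inequality with the soliton identity to show this growth is compensated---this is the heart of Cao--Zhou's argument.
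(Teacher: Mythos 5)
The paper offers no proof of this lemma---it is quoted directly from Cao--Zhou \cite{Cao-Zhou} (see also \cite{Fang-Man-Zhang}, \cite{Haslhofer-Muller})---so the comparison is against the cited argument, which is indeed the one you are trying to reproduce. Your upper bound is complete and is exactly theirs: nonnegativity of $R$ together with $R+|\nabla f|^{2}-\beta f=\mathrm{const}$ gives $|\nabla\sqrt{f}|\leq 1/2$ off the zero set of $f$, which integrates along a minimizing geodesic. The lower bound, however, stops precisely at the step you flag as the ``main obstacle,'' and the route you propose for crossing it does not close. If you estimate the far-endpoint contribution $\int_{s_0-1}^{s_0}(1-\phi^{2})\,\mathrm{Ric}(X,X)\,ds$ by integrating by parts and then bounding every occurrence of $F'$ in absolute value by $\sqrt{F}\leq s/2+O(1)$, you only get $\int_{0}^{s_0}\mathrm{Ric}(X,X)\,ds\leq s_0+C$, hence $F'(s_0)\geq -C$, which yields nothing. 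Your intermediate target $\int_{0}^{s_0}\mathrm{Ric}(X,X)\,ds\leq s_0/2+C$ is true a posteriori but is not something Cao--Zhou establish as a stepping stone, and the ``careful bootstrap'' you invoke is exactly the part of the proof that is missing.

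The mechanism that actually closes the argument is a cancellation you have not identified. After the integration by parts, the far-endpoint term contains $-F'(s_0)-\int_{s_0-1}^{s_0}2\phi\phi'F'\,ds$. One does \emph{not} bound $-F'(s_0)$ by $|F'(s_0)|$; instead one substitutes the ODE identity $F'(s_0)=\beta s_0-\int_{0}^{s_0}\mathrm{Ric}(X,X)\,ds$ (in the appropriate normalization) back into the inequality, whereupon the two copies of $\int_{0}^{s_0}\mathrm{Ric}(X,X)\,ds$ cancel identically. What survives is a lower bound of the form $-\int_{s_0-1}^{s_0}2\phi\phi'F'\,ds\geq s_0/2-C$; since $-2\phi\phi'\geq 0$ has unit integral over $[s_0-1,s_0]$, this forces $F'(s_{*})\geq s_0/2-C$ at some $s_{*}\in[s_0-1,s_0]$, hence $\sqrt{F(s_{*})}\geq F'(s_{*})\geq s_0/2-C$ by $F'\leq|\nabla f|\leq\sqrt{F}$, and the Lipschitz bound $|(\sqrt{F})'|\leq 1/2$ transports this to $s_0$ to give $\sqrt{f(x)}\geq d(x)/2-c_1/2$. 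So the lower bound is not obtained by first proving an integral bound on $\mathrm{Ric}(X,X)$ and then integrating $F'$; it comes from an averaged first-derivative estimate near the far endpoint combined with the already-proved gradient bound. A secondary issue: you should fix one consistent normalization, since the real form of \eqref{solitonequation} with $\beta=1$ is $\mathrm{Ric}+\nabla^{2}f=g$ while the coefficient $1/4$ in the statement corresponds to Cao--Zhou's convention $\mathrm{Ric}+\nabla^{2}f=\tfrac{1}{2}g$; your sketch mixes the two (e.g.\ $F''=1-\mathrm{Ric}(X,X)$ alongside $|\nabla\sqrt f|\le 1/2$), and the leading constant cannot be absorbed into $c_1,c_2$.
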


We also recall an important Perelman's lemma, which will be used in the proof of Proposition \ref{RicNonnegative}.
\begin{lemma}[Perelman]\label{Perelman8.3}
Let $(M^{n},g,f,\beta)$ be a complete gradient Ricci soliton.
Fix $o\in M^{n}$, and define $r(x)\doteqdot d(o,p)$. Let $m=2n$. Suppose ${\rm Ric}(g)\leq (m-1)K$ on $B(o,r_{0})$, for some positive
numbers $r_{0}$ and $K$. Then for any point $x$, outside
$B(o,r_{0})$, we have
\begin{center}
$(\Delta r-<\nabla f,\nabla r>)(x)\leq -<\nabla f,\nabla
r>(o)-\frac{\beta}{2} r(x)+\frac{m-1}{2}\{\frac{2}{3}Kr_{0}+r_{0}^{-1}\}$.
\end{center}
\end{lemma}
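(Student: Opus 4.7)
My plan is to establish this estimate by the standard index form argument along a minimizing geodesic, using the soliton identity $\mathrm{Ric} + \mathrm{Hess}\,f = \beta g$ to convert the integrated Ricci curvature into boundary values of $\nabla f$ plus controllable remainders inside $B(o,r_0)$. First I would fix a minimizing unit-speed geodesic $\gamma:[0, r(x)]\to M$ from $o$ to $x$ with tangent $T$, take a parallel orthonormal frame $E_1,\dots,E_{m-1}$ along $\gamma$ perpendicular to $T$, and choose a cutoff $\phi:[0,r(x)]\to [0,1]$ with $\phi(0)=0$ and $\phi(r(x))=1$. The index form inequality for the test fields $\phi E_i$ yields
\[
\Delta r(x) \;\leq\; (m-1)\int_0^{r(x)} (\phi')^2\,dt \;-\; \int_0^{r(x)} \phi^2\,\mathrm{Ric}(T,T)\,dt.
\]

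Next, I would replace $\mathrm{Ric}(T,T)$ by $\beta - (f\circ\gamma)''$ via the soliton equation and integrate by parts once. The $\phi^2 (f\circ\gamma)'$ boundary term at $t=r(x)$ produces precisely $\langle \nabla f,\nabla r\rangle(x)$, which I would move to the left-hand side; the interior remainder is $-\int 2\phi\phi'(f\circ\gamma)'\,dt - \beta\int \phi^2\,dt$. The strategic point is to arrange that $\phi\phi'$ is supported inside $B(o,r_0)$, where the Ricci hypothesis lets me control $(f\circ\gamma)'(t) = \langle \nabla f,\nabla r\rangle(o) + \int_0^t(\beta - \mathrm{Ric}(T,T))\,du$. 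A natural first attempt is the piecewise linear cutoff $\phi(t)=t/r_0$ on $[0,r_0]$ and $\phi\equiv 1$ on $[r_0, r(x)]$; the hypothesis $\mathrm{Ric}\leq (m-1)K$ on $B(o,r_0)$ then bounds the resulting iterated Ricci integral by $\mathrm{const}\cdot (m-1)K r_0$ and simultaneously extracts the boundary term $-\langle \nabla f,\nabla r\rangle(o)$ from the expansion above.

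The main obstacle I anticipate is landing on the exact numerical constants stated, and in particular on the coefficient $\tfrac{\beta}{2}\,r(x)$ rather than $\beta\,r(x)$: the plain linear cutoff sketched above yields a structurally identical bound but with somewhat different numerical factors (stronger for large $r(x)$, weaker near $t=r_0$). Hitting the precise form in the lemma seems to require a more careful choice of $\phi$, essentially optimising the trade-off between $\int(\phi')^2\,dt$ and $\int\phi^2\,dt$ on $[0,r_0]$, or a symmetrisation-type rewriting along $\gamma$. Beyond this fine-tuning, the whole argument is just the one-line index form inequality followed by two uses of the soliton equation and one integration by parts; the smoothness issue at the kink $t=r_0$ is standard and handled by a routine mollification.
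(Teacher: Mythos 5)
Your plan is the standard second--variation/index--form argument with Perelman's cutoff, and this is exactly the proof the paper is invoking: the paper does not prove this lemma itself but refers to Perelman's Lemma 8.3, Hamilton's \S 17, and Proposition 2.2 of Z.-H. Zhang's paper, all of which run precisely along the lines you describe, so your approach is the intended one. Two remarks on the part you flag as unresolved. First, you do not need a cleverer $\phi$: the piecewise linear cutoff already produces the exact combination $\frac{2}{3}Kr_{0}+r_{0}^{-1}$. This comes out most cleanly if, instead of integrating by parts against $\phi^{2}$ (which creates the iterated Ricci integral you mention), you split $\int_{0}^{r}\phi^{2}\,\mathrm{Ric}(T,T)\,dt=\int_{0}^{r}\mathrm{Ric}(T,T)\,dt-\int_{0}^{r_{0}}(1-\phi^{2})\,\mathrm{Ric}(T,T)\,dt$, bound the second piece by $\frac{2}{3}(m-1)Kr_{0}$ using $\int_{0}^{r_{0}}(1-t^{2}/r_{0}^{2})\,dt=\frac{2}{3}r_{0}$, and apply the soliton identity only to the first piece, which integrates exactly (no iterated integral) to $\beta' r(x)-\langle\nabla f,\nabla r\rangle(x)+\langle\nabla f,\gamma'(0)\rangle$ when $\mathrm{Ric}+\mathrm{Hess}\,f=\beta' g$ as real tensors. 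Second, the residual mismatch of coefficients ($\frac{\beta}{2}$ versus $\beta$, and $\frac{m-1}{2}$ versus $m-1$) cannot be recovered by optimizing the test function or by symmetrization: with the hypothesis $\mathrm{Ric}\le (m-1)K$ read on the real Ricci tensor, the argument yields $(m-1)\{\frac{2}{3}Kr_{0}+r_{0}^{-1}\}$, and the extra factors of $\frac{1}{2}$ in the statement are artifacts of the normalization in the sources being quoted (the passage between the complex soliton equation $R_{i\overline{j}}+\nabla_{i}\nabla_{\overline{j}}f=\beta g_{i\overline{j}}$ and its real form). So you should settle the constants by fixing conventions, not by searching for a sharper index-form computation; aside from this bookkeeping, your argument is complete and correct.
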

The above lemma follows from an idea of Perelman;
see Lemma 8.3 in \cite{Perelman} and its antecedent in \S17 on 'Bounds on
changing distances', in \cite{Hamilton}. For the detailed proof, also see Proposition 2.2 in \cite{Zhz} or \cite{Chow}.

\section{Proof of Proposition \ref{RicNonnegative}}
In this section, we prove the Proposition \ref{RicNonnegative}, which will be used to prove the Theorem \ref{MainThm2}.
\begin{prop}
Let $(M^{n}, g, f,\beta)$ be a complete gradient K\"ahler-Ricci solitons with nonnegative orthogonal bisectional curvature. Then we have\\
{\rm (i)} If $\beta\geq 0$, then ${\rm Ric}(g)\geq 0$. Furthermore, if the orthogonal bisectional curvature is positive, then ${\rm Ric}(g)> 0$.\\
{\rm (ii)} If $\beta<0$, then ${\rm Ric}(g)\geq \beta g.$
\end{prop}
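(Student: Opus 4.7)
The plan is to run a pointwise maximum-principle argument for the smallest eigenvalue $\lambda_1(x)$ of $\mathrm{Ric}(g)$, exploiting the soliton identity (4). Assume first that $m := \inf_M \lambda_1$ is attained at some $p \in M$, the noncompact case being deferred. Pick a unitary frame at $p$ diagonalizing $\mathrm{Ric}$ so that $R_{i\bar j}(p) = \lambda_i \delta_{ij}$ and $\lambda_1(p) = m$, and extend it by parallel transport along radial geodesics out of $p$. The scalar function $u(x) := R_{i\bar j}(x)\,e_1^i(x)\overline{e_1^j(x)}$ satisfies $u(x) \geq \lambda_1(x) \geq m = u(p)$ by the Rayleigh quotient, so $u$ has a local minimum at $p$, giving $\Delta_f u(p) \geq 0$. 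Since radial parallel extension makes $\nabla e_1(p) = 0$ and $\nabla^2 e_1(p) = 0$, this reduces cleanly to $\Delta_f R_{1\bar 1}(p) \geq 0$.

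Plugging the diagonalizing frame into (4) and using the K\"ahler trace identity $R_{1\bar 1} = \sum_k R_{1\bar 1 k\bar k}$ to replace $R_{1\bar 1 1\bar 1}$ by $\lambda_1 - \sum_{k>1} R_{1\bar 1 k\bar k}$, a short computation yields
\begin{equation*}
0 \;\leq\; \Delta_f R_{1\bar 1}(p) \;=\; \lambda_1(\beta - \lambda_1) \;-\; \sum_{k>1} R_{1\bar 1 k\bar k}(p)\,(\lambda_k - \lambda_1).
\end{equation*}
Each summand on the right is nonnegative, since $\lambda_k \geq \lambda_1$ by definition of $\lambda_1$ and $R_{1\bar 1 k\bar k} \geq 0$ by hypothesis, so $\lambda_1(\beta - \lambda_1) \geq 0$. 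A sign analysis then gives (i) $\lambda_1 \geq 0$ when $\beta \geq 0$, and (ii) $\lambda_1 \geq \beta$ when $\beta < 0$. For the strict positivity in (i) under strict orthogonal bisectional curvature, if $\lambda_1(p_0) = 0$ for some $p_0$, the same inequality combined with strict positivity of $R_{1\bar 1 k\bar k}$ forces $\lambda_k(p_0) = 0$ for every $k$, so $R(p_0) = 0$; applying the strong maximum principle to (5) rewritten as $\Delta_f R - \beta R = -|\mathrm{Ric}|^2 \leq 0$ then propagates $R \equiv 0$, hence $\mathrm{Ric} \equiv 0$, which is ruled out by a de Rham-type splitting argument incompatible with strictly positive mixed orthogonal bisectional terms.

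The principal obstacle is that in the noncompact setting $\inf_M \lambda_1$ need not be attained. For shrinking solitons ($\beta > 0$), the Cao--Zhou quadratic bound makes $f$ proper, so I would apply the argument above to $u + \epsilon f$ in place of $u$, producing an approximate minimum $p_\epsilon$ at which the inequality holds up to an additive error $\epsilon\, \Delta_f f = \epsilon(\beta n - R - |\nabla f|^2)$ that can be absorbed as $\epsilon \to 0^+$. For the remaining cases ($\beta \leq 0$) one replaces $f$ by a suitable function of the distance $r(x)$ and invokes Perelman's Lemma~\ref{Perelman8.3} to control the drift Laplacian, in the style of the Munteanu--Wang / Wu--Zhang argument. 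A residual technical subtlety is the potential non-smoothness of $\lambda_1$ where the smallest Ricci eigenvalue is non-simple, but this is bypassed entirely by working with the smooth auxiliary function $u$ built from a parallel extension of the eigenvector.
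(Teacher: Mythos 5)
Your pointwise computation is essentially the paper's: the same trace identity $R_{1\overline{1}}=\sum_{k}R_{1\overline{1}k\overline{k}}$ converting the quadratic term of the soliton identity (4) into $\lambda_1^2+\sum_{k>1}R_{1\overline{1}k\overline{k}}(\lambda_k-\lambda_1)$, the same barrier built by parallel transport of the eigenvector along radial geodesics, and a near-identical treatment of strict positivity (the paper uses $R>0$ to make the inequality strict, $\Delta_f h<\beta h$, and applies the strong maximum principle to $h$; your variant, which forces $\lambda_k(p_0)=0$ for all $k$ and hence $R(p_0)=0$, already contradicts the fact that positive orthogonal bisectional curvature implies $R>0$, so the closing appeal to a ``de Rham-type splitting'' is both unnecessary and vaguer than the algebraic fact actually needed).

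The gap is entirely in the noncompact localization, which is where the real work of this proposition lies. First, for $\beta>0$ the perturbation $u+\epsilon f$ only produces a minimum point $p_\epsilon$ if $\lambda_1+\epsilon f$ is proper, i.e.\ if $\lambda_1(x)\geq -o\bigl(d(x)^2\bigr)$; no such a priori lower bound on the smallest Ricci eigenvalue follows from the hypotheses (nonnegative orthogonal bisectional curvature only bounds $\lambda_1$ below by the minimal holomorphic sectional curvature, which is uncontrolled), and for $\beta=0$ the function $f$ need not be proper at all. The paper sidesteps this by minimizing the compactly supported function $\Phi=\eta(r/c)\lambda$, whose infimum, if negative, is automatically attained. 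Second, for $\beta<0$ the sharp bound $\lambda_1\geq\beta$ is not a routine consequence of ``Perelman's lemma in the style of Munteanu--Wang'': after localizing, the coefficient multiplying $\beta$ in the resulting lower bound is $\eta-\tfrac{1}{2}\eta'\bigl(\tfrac{r(x_0)}{c}\bigr)\tfrac{r(x_0)}{c}$, which exceeds $1$ for a generic cutoff and, since $\beta<0$, would only yield $\lambda_1\geq C\beta$ for some $C>1$. The paper devotes the second half of its proof to constructing a nearly piecewise-linear cutoff with slope $-1/b$ on $[1,1+b]$ precisely so that this coefficient tends to $1$ as $b\to\infty$ (the estimate $h_{\gamma}(u)\geq -1-\tfrac{1+\delta}{b}$); your sketch does not address this, and part (ii) depends on exactly that step.
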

\begin{proof}
We only consider the case of noncompact manifold. Denote $\lambda(x)$ as the minimal eigenvalue of the Ricci curvature at $x$, suppose $v$ is the eigenvector corresponding to $\lambda(x)$, then
\begin{eqnarray*}
R_{i\overline{j}k\overline{l}}R_{\overline{k}l}v^i v^{\overline{j}}= R(v,\overline{v},\frac{\partial}{\partial z^{k}},\frac{\partial}{\partial z^{\overline{l}}})R_{\overline{k}l}
\end{eqnarray*}
Diagonalizing ${\rm Ric}$ at $x$ so that $R_{\overline{k}l}=\lambda_k \delta_{kl}$, and we may assume that $\lambda(x)=\lambda_{1}\leq \lambda_{2}\leq \cdots \leq \lambda_{n}$. Let $v(x)=\frac{\partial}{\partial z^{1}}$. We can write following as
\begin{eqnarray}\label{8}
\begin{aligned}
R_{i\overline{j}k\overline{l}}R_{\overline{k}l}v^i v^{\overline{j}}&=R(v,\overline{v},\frac{\partial}{\partial z^{k}},\frac{\partial}{\partial z^{\overline{k}}})\lambda_k\\
&=R(v,\overline{v}, v, \overline{v})\lambda_{1}+\sum_{1\neq j}R_{1\overline{1}j\overline{j}}\lambda_{j}\\
&=(\lambda_{1}-\sum_{1\neq j}R_{1\overline{1}j\overline{j}})\lambda_{1}+\sum_{1\neq j}R_{1\overline{1}j\overline{j}}\lambda_{j}\\
&=\lambda_{1}^{2}+\sum_{1\neq j}R_{1\overline{1}j\overline{j}}(\lambda_{j}-\lambda_{1})
\end{aligned}
\end{eqnarray}
Since the orthogonal bisectional curvature is nonnegative, we know at $x$
\begin{equation}\label{nonnegativity}
R_{i\overline{j}k\overline{l}}R_{\overline{k}l}v^i v^{\overline{j}}\geq \lambda(x)^{2}.
\end{equation}

 Fix $o\in M^{n}$ and fix a large number $b$. Let
\begin{center}
$\eta:[0,\infty)\rightarrow [0,1]$
\end{center}
be a $\mathcal{C}^{\infty}$ nonincreasing cutoff function with
$\eta(u)=1$ for $u\in [0,1]$ and $\eta(u)=0$ for $u\in
[1+b,\infty)$. Define $\Phi : M\rightarrow \mathbb{R}$ by
\begin{center}
$\Phi(x)=\eta(\frac{r(x)}{c})\lambda(x)$
\end{center}
for $c\in (0,\infty)$. Later we shall take $c\rightarrow \infty$.

Suppose $x_{0}\in M$ is such that
\begin{equation}
\Phi(x_{0})=\min_{M}\Phi<0.
\end{equation}
Then $\lambda(x_{0})<0$. Suppose $v\in T_x^{1,0}M$ is the unit eigenvector corresponding to $\lambda(x_{0})$,
  taking parallel translation of $v$  along any unit speed geodesic starting from $x_{0}$, then in a small neighborhood $B(x, \delta)$, we get a
  smooth vector field $V(x)$ with $V(x_{0})=v$. Define $h(x)={\rm Ric}(x)(V(x), \overline{V(x)})$, then $h(x)\geq \lambda(x)$ for $x\in B(x_{0},\delta)$ and $h(x_{0})=\lambda(x_{0})$, moreover, using $\nabla V(x)=0$ and since $R_{i\overline{j}k\overline{l}}R_{\overline{k}l}V^i V^{\overline{j}}(x)$ is continuous and $h(x)$ is continuous, and $R_{i\overline{j}k\overline{l}}R_{\overline{k}l}V^i V^{\overline{j}}(x_{0})\geq \lambda^{2}(x_{0})=h^{2}(x_{0})>0$ follows from (\ref{nonnegativity}), then for any $\epsilon>0$ there exists $0<\delta_{1}=\delta_{1}(\epsilon)<\delta$, such that $R_{i\overline{j}k\overline{l}}R_{\overline{k}l}V^i V^{\overline{j}}(x)\geq h(x)^{2}/(1+\epsilon)$ for any $x\in B(x_{0},\delta_{1})$. Hence for any $x\in B(x_{0},\delta_{1})$ we have
\begin{eqnarray}\label{InequalityFirstEigenvalue}
\begin{aligned}
\Delta_f h(x)&=(\Delta_f {\rm Ric}) (V(x),\overline{ V(x)})=\Delta_f R_{i\overline{j}}v^i v^{\overline{j}}(x)=(-R_{i\overline{j}k\overline{l}}R_{\overline{k}l}(x)+\beta R_{i\overline{j}}(x))V^i V^{\overline{j}}\\
&\leq \beta h(x)-h(x)^{2}/(1+\epsilon).
\end{aligned}
\end{eqnarray}
Now let $\tilde{\Phi}(x)=\eta(\frac{r(x)}{c})h(x)$ for $x\in B(x_{0},\delta_{1})$. It is easy to compute that
\begin{eqnarray}
\begin{aligned}
\Delta_{f}\tilde{\Phi}
&=\eta \Delta_{f}h+\frac{2\eta'}{c}<\nabla r,\nabla h>+\frac{\eta'}{c}h\Delta_{f}r+\frac{\eta'' h}{c^{2}}\\
&\leq \eta(\beta h-\frac{h^{2}}{1+\epsilon})+\frac{2\eta'}{c}<\nabla r,\nabla h>+\frac{\eta'}{c}h\Delta_{f}r+\frac{\eta'' h}{c^{2}}
\end{aligned}
\end{eqnarray}
For $x\in B(x_{0},\delta_{1})$, we have $\tilde{\Phi}(x)=\eta(\frac{r(x)}{c})h(x)\geq \eta(\frac{r(x)}{c})\lambda(x)=\Phi(x)\geq \Phi(x_0)=\tilde{\Phi}(x_0).$ We compute at point $x_0$, then we have
\begin{equation}
0\leq \eta(\beta h-\frac{h^{2}}{1+\epsilon})+\frac{2\eta'}{c}<\nabla r,\nabla h>+\frac{\eta'}{c}h\Delta_{f}r+\frac{\eta'' h}{c^{2}}.
\end{equation}
Since $\nabla \tilde{\Phi}(x_0)=0$, we have $\frac{\nabla h}{h}(x_0)=-\frac{\eta'}{\eta}(x_0)\frac{\nabla r}{c}(x_0).$ Note that $h(x_0)<0$, we obtain
\begin{equation}\label{4.4}
0\geq \eta(\beta -\frac{h}{1+\epsilon})+\frac{\eta'}{c}\Delta_{f}r+\frac{1}{c^{2}}(\eta''-\frac{2(\eta')^{2}}{\eta}).
\end{equation}
We consider two cases, depending on the location of $x_0$.\\
{\bfseries Case (i)}\quad Suppose $r(x_0)<c$, so that $\eta(\frac{r(x)}{c})=1$ in a neighborhood of $x_0$, then $\eta'(x_0)=\eta''(x_0)=0$ and from (\ref{4.4}),
\[\frac{h(x_0)}{1+\epsilon}\geq \beta.\]
In the case of $\beta\geq 0$, it is a contradiction. Hence when $\beta\geq 0$, we obtain $\lambda(x)\geq 0$. \\
{\bfseries Case (ii)}\quad Now suppose $r(x_0)\geq c$ and again consider (\ref{4.4}). Note that we may choose $\eta$ so that $-C_1\leq \eta'\leq 0$ and
\begin{equation}\label{CutoffEstimate}
\eta''-\frac{2(\eta')^{2}}{\eta}\geq -C_1
\end{equation}
for some uniform constant $C_1<\infty$. Since $\eta'<0$, applying Lemma \ref{Perelman8.3} (2) and (\ref{CutoffEstimate}) to (\ref{4.4})
\begin{eqnarray}
\begin{aligned}
\frac{\tilde{\Phi}(x_0)}{1+\epsilon}\geq & \frac{\eta'(\frac{r(x_0)}{c})}{c}(\frac{m-1}{2r_0}-\langle\nabla f,\nabla r\rangle(o)-\frac{\beta}{2} r(x_0)+\frac{2}{3}r_0\max_{\overline{B(o,r_0)}}{\rm Ric})\\
&+\eta \beta-\frac{C_1}{c^{2}},
\end{aligned}
\end{eqnarray}
where $C_{1}$ is independent of $c$. Taking $r_0=1/2$ and $c\geq 2$, since $-C_1\leq \eta'\leq 0$, we obtain for all $x\in B(o, c)$
\begin{eqnarray}\label{4.9}
\begin{aligned}
\frac{\lambda(x)}{1+\epsilon}\geq \frac{\tilde{\Phi}(x_0)}{1+\epsilon}\geq & -\frac{C_1}{c}(m-1+|\nabla f|(o)+\frac{1}{3}\max_{\overline{B(o,1/2)}}{\rm Ric}+\frac{1}{c})\\
&+\beta(\eta -\frac{1}{2}\eta'(\frac{r(x_0)}{c})\frac{r(x_0)}{c}).
\end{aligned}
\end{eqnarray}

When take $c\rightarrow \infty$, then the first term of right hand
side of (\ref{4.9}) tends to $0$. In the case of $\beta\geq 0$, we obtain $\lambda(x_0)\geq 0$, it is a contradiction. Hence when $\beta\geq 0$, we have $\lambda(x)\geq 0$ for any $x\in M$.

In the case of $\beta<0$, we using the same argument in the proof of Theorem 1.5 in the author's paper \cite{Zhs}. For convenience of readers, we also provide the details as following.

We only consider to estimate the term
$\frac{1}{2}\eta'(\frac{r(x_{0})}{c})\frac{r(x_{0})}{c}-\eta(\frac{r(x_{0})}{c})$.
Since $x_{0}\in B(o,(1+b)c)-B(o,c)$, we have $1\leq
\frac{r(x_{0})}{c}<1+b$. Define $h_{\eta}(u)$ by
\begin{center}
$h_{\eta}(u)=\frac{1}{2}\eta'(u)u-\eta(u)$.
\end{center}
So we only estimate $h_{\eta}(u)$ for $u\in [1,1+b]$.

If we replace $\eta$ with nonnegative piecewise linear function
$\theta(u)$ such that
\begin{eqnarray*}
\theta(u)=\begin{cases}
1 & \text{if $u\in [0,1]$},\\
\frac{1+b-u}{b} & \text{if $u\in [1,1+b]$},\\
0 & \text{if $u\in [1+b,\infty)$}
\end{cases}
\end{eqnarray*}
then $h_{\theta}(u)=-\frac{2b+2-u}{2b}$ for $u\in [1,1+b]$. So
$h_{\theta}(u)\geq -1$ for $u\in [2,b]$ and $h_{\theta}(u)\geq
-1-\frac{1}{b}$ for $u \in [1,2]$. For any small positive number
$\delta$, we can obtain a $\mathcal{C}^{\infty}$ cutoff function
$\gamma$ after smooth the linear function $\theta$ such that
$\gamma(u)=\theta(u)$ for $u\in [0,1]\cup[2,b]\cup[1+b,\infty)$ and
$-\frac{1+\delta}{b}\leq \gamma'(u)\leq 0$ for $u\in
[1,2]\cup[b,1+b]$. When $u\in [b,1+b]$, it is easy to get $-\gamma(u)\geq -\frac{1+\delta}{b}$. So when $b$ is large and $\delta\leq \frac{b-1}{b+1}$, we have $h_{\gamma}(u)\geq -1-\frac{1+\delta}{b}$ for $
u\in [1,1+b]$. Let $\eta$ equal
$\gamma$, take $c\rightarrow \infty, \delta\rightarrow 0,
b\rightarrow \infty$, by (\ref{4.9}) we obtain
\[\frac{\tilde{\Phi}(x_0)}{1+\epsilon}\geq \beta.\]
Taking $\epsilon\rightarrow 0$, we obtain
\[\lambda(x)\geq \beta\]
for all $x\in M$.

In the case of $\beta\geq 0$, we have proved that ${\rm Ric}(g)\geq 0$. Furthermore, if we assume the orthogonal bisectional curvature is positive, we show that the Ricci curvature is positive. If not, there exists a point $x_0\in M$, and a unit vector $v\in T_{x_0}^{1,0}M$ such that $\lambda(x_0)=\lambda_1(x_0)={\rm Ric}(v,\overline{v})(x_0)=0$. Since the orthogonal bisectional curvature is positive, the scalar curvature $R>0$ (see (3.4) in \cite{Gu-Zhang}. Hence there exists a positive integer $i\in[2,n]$, such that $\lambda_k(x_0)>0$ for all $k\geq i.$ Using the same notation as above, then from equality (\ref{8}), we can rewrite the inequality (\ref{InequalityFirstEigenvalue}) as
\begin{equation}
\Delta_f h(x)<\beta h(x)
\end{equation}
for any $x\in B(x_{0},\delta_{1})$. By the strong maximum principle, see Theorem 3.5 in \cite{Gilbarg-Trudinger}, we have $h(x)=0$ for all $x\in B(x_{0},\delta_{1})$. It is a contradiction. Hence ${\rm Ric}(g)>0$.
\end{proof}

\section{Proof of  Theorem \ref{MainThm1} and Theorem \ref{MainThm2}}
In this section, we prove the main theorems of this paper.
\begin{theorem}[Theorem \ref{MainThm1}]\label{compactnessresult}
Suppose $(M^n, g, f)$ is a complete  shrinking gradient K\"ahler-Ricci soliton , if we assume the orthogonal bisectional curvature is nonnegative and the Ricci curvature is positive, then it must be compact.
\end{theorem}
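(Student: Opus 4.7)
The proof is by contradiction, following the cutoff maximum principle strategy of Munteanu-Wang used in the proof of Proposition \ref{RicNonnegative}, now strengthened by the positive Ricci hypothesis. After normalizing $\beta = 1$, suppose for contradiction that $M$ is noncompact. By Proposition \ref{RicNonnegative}(i) combined with the hypothesis, $\mathrm{Ric}(g) > 0$ everywhere on $M$.

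Let $\lambda(x)$ denote the minimum Ricci eigenvalue at $x$. Using formula (4) of Lemma 2.1 together with the orthogonal bisectional curvature estimate
\[
R_{i\overline{j}k\overline{l}}R_{\overline{k}l}v^{i}v^{\overline{j}} \geq \lambda(x)^{2}
\]
from (3.1)--(3.2), and the parallel-transport construction used in the proof of Proposition \ref{RicNonnegative}, one obtains in a neighborhood of any chosen point $x_{0}$ the pointwise inequality
\[
\Delta_{f} h \leq h - \frac{h^{2}}{1+\epsilon},
\]
where $h \geq \lambda$ is a smooth local function with $h(x_{0}) = \lambda(x_{0})$ and $\epsilon > 0$ is arbitrary.

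My plan is to apply the cutoff maximum principle to the test function $\tilde{\Phi}_{c}(x) = \eta(r(x)/c)\lambda(x)$ (or to a suitable variant thereof designed to detect a quantitative gap, for instance $\eta \cdot (\lambda - \delta_{0})$ for a chosen $\delta_{0} > 0$). The analysis at an interior extremum, combined with Lemma \ref{Perelman8.3} to control $\Delta_{f} r$, the Cao-Zhou estimate to bound the growth of $f$ and $|\nabla f|$, and passage to the limit $c \to \infty$, should force a uniform positive lower bound $\mathrm{Ric}(g) \geq \delta g$ on $M$ for some $\delta > 0$. Once this is established, the Bonnet-Myers theorem yields that $M$ has finite diameter, contradicting the noncompactness assumption.

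The main obstacle is selecting the test function so that positive Ricci (not merely nonnegative Ricci) enters the extremal analysis in an essential way, producing a quantitative rather than qualitative conclusion. In Proposition \ref{RicNonnegative}, the argument concluded $\lambda \geq 0$ from the sign of the leading term alone; here a strict positive lower bound is required. Managing the cutoff boundary corrections and the Perelman-type terms in the limit $c \to \infty$, so that the resulting limiting inequality is sharp enough to furnish this quantitative lower bound and rule out the degenerate scenario in which $\lambda \to 0$ along a sequence escaping to infinity, is the crux of the technical work.
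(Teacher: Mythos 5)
Your plan has a genuine gap at exactly the point you flag as ``the crux of the technical work,'' and the gap is not just technical: the route you propose cannot be made to work. The barrier inequality $\Delta_f \lambda \leq \lambda$ (with $\beta=1$) admits decaying positive supersolutions; indeed, using $\Delta f = n - R$ and $|\nabla f|^2 \approx f$ from Lemma 2.1, one checks that $a/f$ satisfies $\Delta_f(a/f) \approx a/f$ at infinity, so nothing in a cutoff maximum-principle analysis can exclude the scenario $\lambda(x) \sim a/f(x) \sim a/d(x)^2 \to 0$ along a sequence going to infinity. Consequently no choice of test function of the form $\eta\cdot\lambda$ or $\eta\cdot(\lambda-\delta_0)$ will produce a uniform bound $\mathrm{Ric} \geq \delta g$, and the Bonnet--Myers endgame is unavailable. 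This is precisely why Munteanu--Wang (and the paper) do \emph{not} attempt a uniform lower bound on the Ricci curvature.

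The paper's actual argument settles for the decaying bound and extracts the contradiction from a different mechanism. One picks $r$ large, sets $a:=\min_{\partial B(p,r)}\lambda>0$ (this is where positivity of Ricci enters, and only here), and applies the maximum principle on $M\setminus B(p,r)$ to the Chow--Lu--Yang function $U=\lambda-\frac{a}{f}-\frac{2na}{f^2}$, which satisfies $\Delta_f U\leq U$ and is positive on $\partial B(p,r)$; this yields $\mathrm{Ric}\geq \frac{a}{f}$. Integrating identity (2) of Lemma 2.1, $\nabla_i R = R_{i\overline{k}}\nabla_k f$, along the integral curves of $\nabla f$ then upgrades this to $R\geq b\log f$, hence $R\gtrsim \log r$ by the Cao--Zhou potential estimate. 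This forces $\int_{B(p,r)}R\geq b\,c(n)\log(r)\,\mathrm{Vol}(B(p,r))$, contradicting the Cao--Zhou average scalar curvature bound $\int_{B(p,r)}R\leq c_1(n)\,\mathrm{Vol}(B(p,r))$. Your proposal contains neither the passage from $\mathrm{Ric}\geq a/f$ to logarithmic growth of $R$ nor the integral comparison that closes the argument, so as written it does not constitute a proof.
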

\begin{proof}
We can use the same argument of Munteanu and Wang \cite{Munteanu-Wang} (also see \cite{Wu-Zhang}) to obtain the theorem. For convenience of readers, we provide the outline of the proof here.

Assume $(M,g,f)$ is noncompact. Let $p$ be a fixed point such that $\nabla f(p)=0$. Denote $\lambda(x)$ as the minimal eigenvalue of the Ricci curvature at $x$. From the proof of Proposition \ref{RicNonnegative}, we know $\lambda$ satisfies the following differential inequality in the sense of barrier, see  (\ref{InequalityFirstEigenvalue}),
\begin{equation*}
\Delta_f \lambda\leq \lambda.
\end{equation*}
Choose a geodesic ball $B(p,r)$ of radius $r$ large enough, then $a:=\min_{\partial B(p,r)}\lambda>0$. We define
\[U:=\lambda-\frac{a}{f}-\frac{2na}{f^{2}},\]
was introduced by Chow-Lu-Yang \cite{Chow-Lu-Yang}. Then if $r$ is large enough, $U>0$ on $\partial B(p,r)$ and $\Delta_{f}U\leq U$ on $M\backslash B(p,r)$. By the maximum principle, it is easy to get $U\geq 0$ on $M\backslash B(p,r)$. That implies ${\rm Ric}\geq \frac{a}{f}$, then using (2) in Lemma 2.1 to obtain $R\geq b\log f$ for some $b>0$.
Then use Lemma 2.1, Lemma 2.2 and Bishop-Gromov volume comparison theorem, for $r$ large enough, we get
\[\int_{B(p,r)}R\geq b\cdot c(n)\log(r){\rm Vol}(B(p,r)).\]

On the other hand, Cao-Zhou \cite{Cao-Zhou} proved that
\[\int_{B(p,r)}R\leq c_{1}(n) {\rm Vol}(B(p,r)).\]
Here $c(n), c_{1}(n)$ are uniform constants depending only on $n$. It is a contradiction.
\end{proof}
By Proposition \ref{RicNonnegative} and Theorem \ref{MainThm1}, we obtain Theorem \ref{MainThm2}.
\begin{theorem}[Theorem \ref{MainThm2}]
Let $(M^{n},g,f)$ be a complete shrinking gradient K\"ahler-Ricci soliton with nonnegative orthogonal bisectional curvature, then we have:
\begin{itemize}
\item[(i)] if the orthogonal bisectional curvature is positive, then $M$ must be compact and isometric-biholomorphic to $\mathbb{C}P^{n}$;
\item[(ii)] if $M$ has nonnegative orthogonal bisectional curvature , then its universal cover $\tilde{M}$ splits as $\tilde{M}=N_{1}\times N_{2}\times \cdots \times N_{l}\times \mathbb{C}^{k}$ isometric-biholomorphically, where $N_i$ are compact irreducible Hermitian Symmetric Spaces.
\end{itemize}
\end{theorem}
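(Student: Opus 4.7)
The plan is to reduce each case to the compact K\"ahler setting — where Gu-Zhang's classification (Theorem 1.3 in \cite{Gu-Zhang}) is available — by combining Proposition \ref{RicNonnegative} (to bound $\mathrm{Ric}$ from below), Theorem \ref{MainThm1} (to force compactness once $\mathrm{Ric}>0$), and the de Rham decomposition theorem (to isolate flat directions in part (ii)).

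For part (i), apply Proposition \ref{RicNonnegative}(i) to positive orthogonal bisectional curvature to upgrade the hypothesis to $\mathrm{Ric}>0$. Theorem \ref{MainThm1} then yields compactness of $M$. At this stage $(M,g)$ is a compact K\"ahler manifold with positive orthogonal bisectional curvature, and the generalized Frankel conjecture proved by Chen \cite{Chen} and Gu-Zhang \cite{Gu-Zhang} furnishes a biholomorphism $M\cong\mathbb{C}P^n$. To promote this to an isometric-biholomorphism I would invoke Tian-Zhu's uniqueness of shrinking K\"ahler-Ricci solitons on Fano manifolds up to automorphism: since Fubini-Study is itself a (trivial) shrinking soliton on $\mathbb{C}P^n$, $(M,g)$ must be isometric-biholomorphic to it.

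For part (ii), Proposition \ref{RicNonnegative}(i) gives $\mathrm{Ric}\geq 0$. Lift to the universal cover $(\tilde M,\tilde g,\tilde f)$, which inherits a complete shrinking K\"ahler-Ricci soliton structure of the same type. By de Rham, split $\tilde M$ isometric-biholomorphically as $\tilde M = M_0 \times N_1 \times \cdots \times N_l$ with $M_0$ the maximal flat factor and each $N_i$ irreducible and non-flat. Because both $\mathrm{Ric}$ and $\tilde g$ are block-diagonal with respect to this product, the soliton equation $\mathrm{Ric}+\nabla^2\tilde f=\beta \tilde g$ forces all mixed second derivatives of $\tilde f$ between distinct factors to vanish, so $\tilde f$ splits additively and each factor carries its own shrinking soliton structure. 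On $M_0$, this reduces to $\nabla^2 \tilde f_0 = \beta g_0$, identifying $M_0$ with $\mathbb{C}^k$ equipped with the Gaussian potential. On each $N_i$, apply the strong maximum principle to the inequality
\[\Delta_{\tilde f_i}h_i \leq \beta h_i - h_i^2/(1+\epsilon),\]
satisfied by $h_i(x):=\mathrm{Ric}(V,\overline{V})(x)$ for a parallel translate $V$ of a unit minimal eigenvector (as derived in the proof of Proposition \ref{RicNonnegative}): if the minimum Ricci eigenvalue vanishes at some point, then $h_i\equiv 0$ in a neighborhood, hence (by parallel continuation, using completeness and simple-connectivity, together with $\nabla J=0$) the kernel of $\mathrm{Ric}$ contains a nontrivial parallel holomorphic line, contradicting the irreducibility of $N_i$. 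Thus $\mathrm{Ric}>0$ on $N_i$, Theorem \ref{MainThm1} compactifies it, and Gu-Zhang's Theorem 1.3 identifies $N_i$ as a compact irreducible Hermitian symmetric space.

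The main obstacle I anticipate is the strong-maximum-principle step in part (ii): converting a pointwise zero of the minimal Ricci eigenvalue into a global parallel splitting of $T N_i$. The difficulty is that the minimal eigenvalue is only continuous and its eigenspaces are chosen pointwise, so promoting the zero set to an open neighborhood requires the parallel-translation localization of Proposition \ref{RicNonnegative}, after which one must extend the resulting local parallel vector field across all of $N_i$. A secondary concern in part (i) is the upgrade from biholomorphism to isometric-biholomorphism, which leans on Tian-Zhu uniqueness as a black box.
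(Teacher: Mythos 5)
Your proposal follows essentially the same route as the paper: Proposition 3.1 plus Theorem 4.1 for compactness, Chen/Gu--Zhang for the biholomorphism to $\mathbb{C}P^{n}$ and Tian--Zhu to upgrade it to an isometry in part (i); and, in part (ii), parallelism of $\ker(\mathrm{Ric})$ via a strong maximum principle (the paper cites Hamilton's lemma for this), the de Rham decomposition, and Gu--Zhang's Theorem 1.3. The only differences are cosmetic --- you run de Rham before the maximum-principle step, whereas the paper first extracts the parallel splitting $V_{1}\oplus V_{2}$ of $TM$ --- and the difficulty you flag at the strong-maximum-principle step is present, and left equally implicit, in the paper's own two-sentence argument.
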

\begin{proof}
Part (i). By Proposition \ref{RicNonnegative} and Theorem \ref{compactnessresult}, we know $M$ is compact. Due to Gu-Zhang \cite{Gu-Zhang} (see also Chen \cite{Chen} and Feng-Liu-Wan \cite{Feng-Liu-Wan}), $M$ is biholomorphic to $\mathbb{C}P^{n}$. It exists a Fubini-Study metric $\omega_{FS}$(a special K\"ahler-Einstein metric). Since the complete shrinking gradient K\"ahler-Ricci soliton is a special solution of Ricci flow, applying Tian-Zhu's main theorem in \cite{Tian-Zhu1} (see also \cite{Tian-Zhu2} and \cite{TZZZ}), we know $(M,g)$ is isometric-biholomorphic to $\mathbb{C}P^{n}$.\\

Part (ii). From the proof of the Proposition \ref{RicNonnegative}, we know the tangent bundle $TM$ has an orthogonal decomposition $V_1\oplus V_2$, where $V_1$ and $V_2$ are invariant under parallel translation. Then applying the standard de Rham decomposition theorem (or Hamilton's Lemma in P.176 \cite{Hamilton86}), Gu-Zhang's main theorem (Theorem 1.3 in \cite{Gu-Zhang}) and Part (i) of Theorem 4.2.
\end{proof}
\section*{Acknowledgements}
The author thanks the referees for their helpful suggestions.

\end{document}